\documentclass{amsart}

\usepackage{amsmath,amssymb,amsfonts,enumerate,amsthm, amscd}

\newcommand{\pd}{\mbox{pd}\,}
\newcommand{\id}{\mbox{id}\,}
\newcommand{\fd}{\mbox{fd}\,}
\newcommand{\gd}{\mbox{Gdim}\,}

\setcounter{section}{0}
\newtheorem{theorem}{Theorem}[section]
\newtheorem{lemma}[theorem]{Lemma}
\newtheorem{proposition}[theorem]{Proposition}

\theoremstyle{definition}

\newtheorem{definitions}[theorem]{Definitions}
\theoremstyle{remark}
\newtheorem{remark}[theorem]{Remark}

\newtheorem{example}[theorem]{Example}

\theoremstyle{Definition and Notation}

\begin{document}
\bibliographystyle{amsplain}

\title[]{On $n$-strongly Gorenstein rings}

\author{Najib Mahdou}
\address{Najib Mahdou\\Department of Mathematics, Faculty of Science and Technology of Fez, Box 2202, University S.M. Ben Abdellah Fez,
Morocco. \\ mahdou@hotmail.com}

\author{Mohammed Tamekkante}
\address{Mohammed Tamekkante\\Department of Mathematics, Faculty of Science and Technology of Fez, Box 2202, University S.M. Ben Abdellah Fez,
Morocco. \\ tamekkante@yahoo.fr }

\keywords{Commutative algebra;  strongly (n-)Gorenstein
projective, injective and flat modules, Gorenstein global and weak
dimensions.}

\subjclass[2000]{13D05, 13D02}

\begin{abstract}

This paper introduces and studies a particular subclasses of the
class  of commutative rings with finite Gorenstein global (resp.,
weak) dimensions.
\end{abstract}

\maketitle
\section{Introduction}
Throughout this paper, all rings are commutative with identity,
and
all modules are unitary.\\
Let $R$ be a ring, and let $M$ be an $R$-module. As usual we use
$\pd_R(M)$, $\id_R(M)$ and $\fd_R(M)$ to denote, respectively, the
classical projective dimension, injective dimension and flat
dimension of $M$. We use $gldim(R)$ and $wdim(R)$ to denote,
respectively, the classical global and weak global
 dimension of $R$.\\

For two-sided Noetherian ring $R$, Auslander and Bridger \cite{Aus
bri} introduced the $G$-dimension, $\gd_R (M)$, for every finitely
generated $R$-module $M$. They showed that there is an inequality
$\gd_R (M)\leq \pd_R (M)$ for all finite $R$-modules $M$, and the
equality holds if $\pd_R (M)$ is finite.

Several decades later, Enochs and Jenda \cite{Enochs,Enochs2}
defined the notion of Gorenstein projective dimension
($G$-projective dimension for short), as an extension of
$G$-dimension to modules that are not necessarily finitely
generated, and the Gorenstein injective dimension ($G$-injective
dimension for short) as a dual notion of Gorenstein projective
dimension. Then, to complete the analogy with the classical
homological dimension, Enochs, Jenda and Torrecillas \cite{Eno
Jenda Torrecillas} introduced the Gorenstein flat dimension. Some
references are
 \cite{Bennis and Mahdou2, Christensen, Christensen
and Frankild, Enochs, Enochs2, Eno Jenda Torrecillas, Holm}.\\

Recall that an $R$-module $M$ is called Gorenstein projective if,
there exists an exact sequence of projective $R$-modules:
$$\mathbf{P}:...\rightarrow P_1\stackrel{f_1}\rightarrow P_0\stackrel{f_0}\rightarrow
P^0\stackrel{f^0}\rightarrow P^1\stackrel{f^1}\rightarrow ...$$
such that $M\cong Im(f_0)$ and such that the operator $Hom_R(-,Q)$
leaves $\mathbf{P}$ exact whenever $Q$ is projective. The
resolution
 $\mathbf{P}$ is called a complete projective resolution.
  In particular, if for each $i$ we have $P_i=P^i=P$ and $f^i=f_i=f$, the $R$ module
$M$ is called strongly Gorenstein projective(please see \cite{Bennis and Mahdou1}).\\
The (strongly) Gorenstein injective $R$-modules are defined
dually.\\
 And an $R$-module $M$ is called
Gorenstein flat if, there exists an exact sequence of flat left
$R$-modules:
$$\mathbf{F}:...\rightarrow F_1\stackrel{f_1}\rightarrow F_0\stackrel{f_0}\rightarrow
F^0\stackrel{f^0}\rightarrow F^1\stackrel{f^1}\rightarrow ...$$
such that $M\cong Im(f_0)$ and such that the operator
 $-\otimes_RI$ leaves $F$ exact whenever $I$
is an injective $R$-module. The resolution $\mathbf{F}$ is called
complete flat resolution.  In particular, if for each $i$ we have
$F_i=F^i=F$ and $f^i=f_i=f$, the $R$ module $M$ is called strongly
Gorenstein flat (please see \cite{Bennis and Mahdou1}).

 The  Gorenstein projective, injective and flat
dimensions are defined in term of resolution and denoted by
$Gpd(-)$, $Gid(-)$ and $Gfd(-)$ respectively (see
\cite{Christensen, Enocks and janda, Holm}).

In \cite{Bennis and Mahdou2},  the authors prove the equality:
$$sup\{Gpd_R(M)|M\;is\;an\;R-module\}=sup\{Gid_R(M)|M\;is\;an\;R-module\}$$
They called the common value of the above quantities the
Gorenstein global dimension of $R$ and denoted it by $Ggldim(R)$.
Similarly, they set
$$wGgldim(R)=\{Gfd_R(M)|M\;is\;an\;R-module\}$$
which they called the weak Gorenstein global dimension of $R$.
Note that the results and the notation in \cite{Bennis and
Mahdou2} are in the non-commutative case. In this paper our rings
are commutative and so the left and right (Gorenstein) homological
dimensions can be identified.

Recently, in \cite{MahdouTamekkante}, particular modules of finite
Gorenstein projective, injective and flat dimensions are defined
as follows:
\begin{definitions}
Let $n$ be a positive integer.
\begin{enumerate}
  \item An $R$-module $M$ is said to be strongly $n$-Gorenstein projective,
if there exists a short exact sequence $0\rightarrow M \rightarrow
P \rightarrow M \rightarrow 0$ where $pd_R(P)\leq n$ and
$Ext^{n+1}_R(M,Q)=0$ whenever $Q$ is projective.
  \item An $R$-module $M$ is said to be strongly $n$-Gorenstein injective,
if there exists a short exact sequence $0\rightarrow M \rightarrow
I \rightarrow M \rightarrow 0$ where $id_R(I)\leq n$ and
$Ext^{n+1}_R(E,M)=0$ whenever $E$ is injective.
\item An $R$-module $M$ is said to be
strongly $n$-Gorenstein flat, if there exists a short exact
sequence $0\rightarrow M \rightarrow F \rightarrow M \rightarrow
0$ where $fd_R(F)\leq n$ and $Tor_R^{n+1}(M,I)=0$ whenever $I$ is
injective.
\end{enumerate}
\end{definitions}
Clearly, the strongly $0$-Gorenstein projective, injective and
flat modules are the strongly Gorenstein projective, injective and
flat modules respectively (\cite[Propositions 2.9 and 3.6]{Bennis
and Mahdou2}).

In this paper, we investigate these modules to characterize new
classes of rings with finite (resp., weak) Gorenstein global
dimensions. In \cite{MahdouTamekkante}, the authors prove the
following Proposition:

\begin{proposition}\cite[Proposition 2.16]{MahdouTamekkante}\label{propo defini}
Let $R$ be a ring. The following statements are equivalent:
\begin{enumerate}
  \item Every module is strongly $n$-Gorenstein projective.
  \item Every module is strongly $n$-Gorenstein injective.
\end{enumerate}
\end{proposition}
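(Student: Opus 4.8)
The plan is to prove $(1)\Rightarrow(2)$ and to obtain $(2)\Rightarrow(1)$ as the verbatim projective--injective dual. The guiding idea is that, under hypothesis $(1)$, a \emph{single} short exact sequence witnesses both notions simultaneously: the period-one sequence $0\to M\to P\to M\to 0$ supplied by strong $n$-Gorenstein projectivity turns out to have a middle term $P$ that is at once of projective dimension $\le n$ and of injective dimension $\le n$. For $n=0$ this is just the familiar fact that over a quasi-Frobenius ring projectives are injective, so the sequence does double duty; the general case replaces ``projective $=$ injective'' by ``finite $\pd\le n$ forces finite $\id\le n$'' under the standing hypothesis.

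First I would record the two consequences of $(1)$ that handle the easy half of each definition. Since every strongly $n$-Gorenstein projective module has Gorenstein projective dimension at most $n$ (\cite{MahdouTamekkante}), hypothesis $(1)$ gives $\gpd_R(X)\le n$ for every module $X$, that is $\ggldim(R)\le n$. By the self-duality $\sup_X\gpd_R(X)=\sup_X\gid_R(X)$ of the Gorenstein global dimension (\cite{Bennis and Mahdou2}), we also obtain $\gid_R(X)\le n$ for every $X$. In particular, for any module $M$ and any injective $E$, the $\Ext$-characterization of finite Gorenstein injective dimension (\cite{Holm}) yields $\Ext^{n+1}_R(E,M)=0$, so the $\Ext$-vanishing demanded by strong $n$-Gorenstein injectivity is automatic.

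Next, fix a module $M$ and, using $(1)$, choose a short exact sequence $0\to M\to P\to M\to 0$ with $\pd_R(P)\le n$. The crux of the argument is to show that this same $P$ satisfies $\id_R(P)\le n$. For this I would prove that $\Ext^{n+1}_R(X,P)=0$ for \emph{every} module $X$: indeed $\gpd_R(X)\le n$ and $\pd_R(P)\le n<\infty$, so the description of Gorenstein projective dimension through $\Ext$ against modules of finite projective dimension (\cite{Holm}) forces $\Ext^{i}_R(X,P)=0$ for all $i>n$. Since $X$ is arbitrary, $\id_R(P)\le n$. Combined with the $\Ext$-vanishing of the previous paragraph, the sequence $0\to M\to P\to M\to 0$ now exhibits $M$ as strongly $n$-Gorenstein injective, giving $(1)\Rightarrow(2)$.

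Finally, $(2)\Rightarrow(1)$ follows by interchanging projectives and injectives: $(2)$ forces $\gid_R(X)\le n$ for all $X$, hence $\ggldim(R)\le n$ and $\gpd_R(X)\le n$ for all $X$; for a fixed $M$ the sequence $0\to M\to I\to M\to 0$ with $\id_R(I)\le n$ has $\pd_R(I)\le n$ by the dual vanishing $\Ext^{n+1}_R(I,Y)=0$ for all $Y$, while $\Ext^{n+1}_R(M,Q)=0$ for projective $Q$ holds because $\gpd_R(M)\le n$. I expect the middle step to be the only real obstacle: recognizing that the hypothesis compels the intermediate module of finite projective dimension to also have injective dimension $\le n$, so that one exact sequence serves both definitions. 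Everything else is bookkeeping with the cited $\Ext$-formulas for $\gpd$ and $\gid$ and with the self-duality of $\ggldim$.
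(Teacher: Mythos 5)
The paper does not actually prove this proposition; it imports it by citation from \cite[Proposition 2.16]{MahdouTamekkante}, so there is no in-paper argument to compare against line by line. Your proposal is, as far as I can check, correct and self-contained modulo standard cited facts, and it is very much in the spirit of the machinery this paper does use (compare Proposition \ref{prop1}, which decomposes the $n$-SG condition into ``$\ggldim(R)\le n$'' plus the existence of the period-one sequence with middle term of finite projective dimension). The two load-bearing steps both check out: (a) from $(1)$ you get $\gpd_R(X)\le n$ for all $X$ via \cite[Proposition 2.2(1)]{MahdouTamekkante}, hence $\ggldim(R)\le n$, hence $\gid_R(X)\le n$ for all $X$ by the Bennis--Mahdou sup-equality, which by Holm's $\Ext$-characterization of finite Gorenstein injective dimension gives the vanishing $\Ext^{n+1}_R(E,M)=0$ for injective $E$; and (b) for the middle term $P$ with $\pd_R(P)\le n$, Holm's Theorem 2.20 applied to an arbitrary $X$ with $\gpd_R(X)\le n$ gives $\Ext^{i}_R(X,P)=0$ for $i>n$, whence $\id_R(P)\le n$, so the same short exact sequence witnesses strong $n$-Gorenstein injectivity. (Step (b) is essentially a re-derivation of \cite[Corollary 2.7]{Bennis and Mahdou2}, which the paper invokes for exactly this ``finite $\pd\le n$ forces $\id\le n$'' conversion; you could shorten your argument by citing it directly.) The dual direction is handled symmetrically and correctly. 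I see no gap.
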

Hence, we give the following definitions:
\begin{definitions}Let $n$ be a positive integer.
\begin{enumerate}
    \item A ring $R$ is called $n$-strongly Gorenstein ($n$-SG ring for a
short) if R satisfies one of the equivalent conditions of
Proposition \ref{propo defini}.
    \item A ring $R$ is called weakly $n$-strongly Gorenstein
    ($n$-wSG for a short) if every $R$-module is strongly
    $n$-Gorenstein flat.
\end{enumerate}
\end{definitions}
The rings $0$-SG and  $1$-SG are already studied in \cite{ouarghi,
MT1} over which they are called strongly Gorenstein semi-simple
and hereditary rings respectively. Clearly, by definition, every
$n$-SG (resp., $n$-wSG) ring is $m$-SG (resp., $m$-wSG) whenever
$n\leq m$.\\
After given some characterizations of the $n$-SG and $n$-wSG rings
(see Propositions \ref{prop1} and \ref{prop2}), we will see that
for any ring $R$ we have:
$$gldim(R)\leq n\quad \Longrightarrow \;R\; is\; n-SG\Longrightarrow \;Ggldim(R)\leq n,\: and$$
$$wdim(R)\leq n\quad \Longrightarrow \;R\; is\; n-wSG\Longrightarrow \;wGgldim(R)\leq n.$$
We will also give examples which show that the inverse
implications are not true in the general case (see Examples
\ref{1} and \ref{3}). After that, Theorem \ref{thm1} proves that:
$$R\; is\; an\; n-SG\; ring \quad \Longrightarrow \quad R\; is\; an\; n-wSG\; ring$$
with equivalence if $R$ is Noetherian or perfect with finite
Gorenstein global dimension. But in the general case the inverse
implication is not true as shown by Example \ref{3}.
\section{Main results}

Note that the structure of the $n$-SG rings (resp., $n$-wSG rings)
depends of two variants. The first one is the (resp., weak)
Gorenstein global dimension of these rings and the second is the
form of their modules.
\begin{proposition}\label{prop1} For a ring $R$ and a positive integer $n$, the following statements are equivalent:
\begin{enumerate}
    \item $R$ is $n$-SG ring.
    \item $Ggldim(R)\leq n$ and for every $R$-module $M$ there
    exists a short exact sequence $0\rightarrow M \rightarrow
    P\rightarrow M \rightarrow 0$ where $pd_R(P)<\infty$.
    \item $Ggldim(R)<\infty$ and for every $R$-module $M$ there
    exists a short exact sequence $0\rightarrow M \rightarrow
    P\rightarrow M \rightarrow 0$ where $pd_R(P)\leq n$.
\end{enumerate}
\end{proposition}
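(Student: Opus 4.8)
The plan is to run the cycle $(1)\Rightarrow(2)\Rightarrow(3)\Rightarrow(1)$, reducing all the content to a single homological lemma. Throughout I will use two standard facts of Gorenstein homological algebra: that $\pd_R(N)<\infty$ forces $\gpd_R(N)=\pd_R(N)$, and Holm's characterization that, \emph{provided} $\gpd_R(M)<\infty$, one has $\gpd_R(M)=\sup\{i:\Ext^i_R(M,Q)\neq 0\text{ for some projective }Q\}$; I will also use that the modules of the Definitions have, by construction, finite Gorenstein projective dimension. The implication $(2)\Rightarrow(3)$ is then essentially free: $\ggldim(R)\le n$ gives $\ggldim(R)<\infty$, and for the exact sequence I reuse the given $0\to M\to P\to M\to 0$ with $\pd_R(P)<\infty$, noting that $\pd_R(P)=\gpd_R(P)\le\ggldim(R)\le n$ by the two standard facts, so the same $P$ already satisfies $\pd_R(P)\le n$.

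The heart of the matter is the following lemma: if $0\to M\to P\to M\to 0$ is exact with $\pd_R(P)\le n$ and $\gpd_R(M)<\infty$, then $\gpd_R(M)\le n$ (equivalently $\Ext^{n+1}_R(M,Q)=0$ for every projective $Q$). To prove it I apply $\Hom_R(-,Q)$ to the sequence; since $\pd_R(P)\le n$ annihilates $\Ext^i_R(P,Q)$ for every $i\ge n+1$, the long exact sequence collapses to isomorphisms $\Ext^i_R(M,Q)\cong\Ext^{i+1}_R(M,Q)$ for all $i\ge n+1$ via the connecting maps. Because $\gpd_R(M)<\infty$ these groups vanish for $i\gg 0$, and descending the chain of isomorphisms forces $\Ext^i_R(M,Q)=0$ for every $i\ge n+1$, in particular at $i=n+1$; Holm's characterization then gives $\gpd_R(M)\le n$.

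Granting the lemma, $(3)\Rightarrow(1)$ is immediate: for each module $M$, item (3) supplies the short exact sequence with $\pd_R(P)\le n$, while $\ggldim(R)<\infty$ supplies $\gpd_R(M)<\infty$, so the lemma yields $\Ext^{n+1}_R(M,Q)=0$ for projective $Q$, and together with the sequence this is exactly the definition of $M$ being strongly $n$-Gorenstein projective; hence $R$ is $n$-SG. The lemma also completes $(1)\Rightarrow(2)$: if $R$ is $n$-SG then each $M$ is strongly $n$-Gorenstein projective, which directly provides the sequence with $\pd_R(P)\le n<\infty$ required in (2), and since such an $M$ has finite $\gpd$ by construction, the lemma gives $\gpd_R(M)\le n$; taking the supremum over all $M$ yields $\ggldim(R)\le n$.

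The main obstacle is the periodicity step inside the lemma. The hypothesis only controls $\Ext$ into \emph{projective} targets, so it cannot by itself bound the Gorenstein projective dimension; I must first secure the finiteness of $\gpd_R(M)$ (from $\ggldim(R)<\infty$ in $(3)\Rightarrow(1)$, or from the construction of the module in $(1)\Rightarrow(2)$) before the $\Ext$-periodicity and eventual vanishing can be converted into the sharp vanishing at degree $n+1$. One must also be careful to invoke Holm's characterization only in the direction where finiteness of $\gpd_R(M)$ is already known, since the formula fails without that standing hypothesis.
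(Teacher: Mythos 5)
Your cycle $(1)\Rightarrow(2)\Rightarrow(3)\Rightarrow(1)$ matches the paper's, and two of its three legs are genuinely proved rather than cited: your periodicity lemma correctly reproves what the paper outsources to \cite[Proposition 2.10]{MahdouTamekkante} (this handles $(3)\Rightarrow(1)$), and your argument for $(2)\Rightarrow(3)$ via $\pd_R(P)=\gpd_R(P)\le \ggldim(R)\le n$ is exactly the content of \cite[Corollary 2.7]{Bennis and Mahdou2}. The problem is $(1)\Rightarrow(2)$: there you need every strongly $n$-Gorenstein projective module $M$ to satisfy $\gpd_R(M)\le n$, and your only input to the lemma is the assertion that such modules ``have, by construction, finite Gorenstein projective dimension.'' That assertion is not by construction; it is the one nontrivial theorem in this direction, and it is precisely what the paper invokes \cite[Proposition 2.2(1)]{MahdouTamekkante} for.

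Concretely, the definition hands you an exact sequence $0\to M\to P\to M\to 0$ with $\pd_R(P)\le n$ and the vanishing $\Ext^{n+1}_R(M,Q)=0$ for projective $Q$. From this you can even extract $\Ext^{i}_R(M,Q)=0$ for all $i\ge n+1$ (your periodicity isomorphisms run upward from degree $n+1$ with no finiteness needed), but Ext-vanishing against projectives does not imply finite Gorenstein projective dimension --- as you yourself caution, Holm's formula is only valid once finiteness is already known, so invoking it here would be circular. Nor do the standard bounds on $\gpd$ along a short exact sequence help: with sub and quotient both equal to $M$ they degenerate to tautologies such as $\gpd_R(M)\le\max\{\gpd_R(P),\,\gpd_R(M)+1\}$. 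To close the gap you must actually produce the right half of a complete projective resolution from the periodic data --- for instance, for $n=0$ splice copies of $0\to M\to P\to M\to 0$ and use $\Ext^1_R(M,Q)=0$ to get Hom-exactness, then induct on $n$ via a horseshoe argument yielding $0\to K\to L\to K\to 0$ with $\pd_R(L)\le n-1$ for a first syzygy $K$ of $M$ --- or else cite the result outright. As written, the hardest part of the proposition is concealed in that single phrase, while everything you do prove is the comparatively routine part.
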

\begin{proof} $1\Rightarrow 2.$ Clear since for every $n$-SG ring
$R$ we have $Ggldim(R)\leq n$ (by \cite[Proposition
2.2(1)]{MahdouTamekkante}).\\
$2\Rightarrow 3.$ Follows directly from \cite[Corollary
2.7]{Bennis
and Mahdou2}.\\
$3\Rightarrow 1.$ Follows from \cite[Proposition
2.10]{MahdouTamekkante}.
\end{proof}
Similarly for the $n$-wSG rings we have the following result:

\begin{proposition}\label{prop2} Let $R$ be a ring, $n$ a positive integer, and consider
the following assertions:
\begin{enumerate}
    \item $R$ is $n$-wSG ring.
    \item $wGgldim(R)\leq n$ and for every $R$-module $M$ there
    exists a short exact sequence $0\rightarrow M \rightarrow
    F\rightarrow M \rightarrow 0$ where $fd_R(F)<\infty$.
    \item $wGgldim(R)<\infty$ and for every $R$-module $M$ there
    exists a short exact sequence $0\rightarrow M \rightarrow
    F\rightarrow M \rightarrow 0$ where $fd_R(F)\leq n$.
    \item For every $R$-module $M$ there
    exists a short exact sequence $0\rightarrow M \rightarrow
    F\rightarrow M \rightarrow 0$ where $fd_R(F)\leq n$.
\end{enumerate}
Then, $1\Leftrightarrow 2\Leftrightarrow 3 \Rightarrow 4$ with
equivalence if $R$ is coherent.
\end{proposition}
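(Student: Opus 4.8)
The plan is to follow the template of Proposition~\ref{prop1}, replacing each projective / Gorenstein-projective ingredient by its flat / Gorenstein-flat counterpart, and to isolate the implication involving (4) as the one place where coherence is needed. For the chain $1\Leftrightarrow 2\Leftrightarrow 3$ I would argue exactly as there. For $1\Rightarrow 2$: if $R$ is $n$-wSG then every module is strongly $n$-Gorenstein flat, so $\gfd_R(M)\leq n$ for every $M$ and hence $wGgldim(R)\leq n$ (the weak analogue of \cite[Proposition~2.2]{MahdouTamekkante}), while the sequence $0\to M\to F\to M\to 0$ with $\fd_R(F)\leq n<\infty$ is built into the definition. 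For $2\Rightarrow 3$: the hypothesis already gives $wGgldim(R)<\infty$, and upgrading $\fd_R(F)<\infty$ to $\fd_R(F)\leq n$ is the weak analogue of \cite[Corollary~2.7]{Bennis and Mahdou2}, namely that over a ring of finite weak Gorenstein global dimension a module of finite flat dimension has flat dimension at most $wGgldim(R)\leq n$. For $3\Rightarrow 1$: finiteness of $wGgldim(R)$ gives $\gfd_R(M)<\infty$ for all $M$, the standard Gorenstein-flat inequality applied to $0\to M\to F\to M\to 0$ then forces $\gfd_R(M)\leq n$, and Holm's formula $\gfd_R(M)=\sup\{i:\Tor_i^R(M,I)\neq 0,\ I \text{ injective}\}$ yields $\Tor^{n+1}_R(M,I)=0$ for every injective $I$; together with the given sequence this is strong $n$-Gorenstein flatness (the weak analogue of \cite[Proposition~2.10]{MahdouTamekkante}). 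Finally $3\Rightarrow 4$ is immediate, (4) being (3) with the dimension clause deleted.

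The substance lies in $4\Rightarrow 1$ when $R$ is coherent, where the one missing ingredient is the finiteness $wGgldim(R)<\infty$: once that is in hand we are in the setting of (3) (the defining sequences being supplied by (4)), and the already-established $3\Rightarrow 1$ closes the loop. To produce finiteness I would invoke the characterization, valid over a coherent ring, that $wGgldim(R)=\sup\{\fd_R(E):E \text{ an injective } R\text{-module}\}$; it then suffices to bound $\fd_R(E)$ for each injective $E$. Applying (4) to $E$ gives a short exact sequence $0\to E\to F\to E\to 0$ with $\fd_R(F)\leq n$, and since $E$ is injective the monomorphism $E\to F$ splits, so $E$ is a direct summand of $F$ and $\fd_R(E)\leq\fd_R(F)\leq n$. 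Hence $wGgldim(R)\leq n<\infty$.

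I expect the finiteness step to be the main obstacle. The Gorenstein-flat inequalities only promote \emph{finite} $\gfd_R(M)$ to $\gfd_R(M)\leq n$ and are silent when $\gfd_R(M)=\infty$, so the three finiteness-free data in (4) cannot by themselves break the circularity; the splitting trick above is what does, but it draws its force from having the injective modules available inside (4) and from the coherent formula for $wGgldim(R)$, whose proof is where coherence is genuinely consumed. This also explains why $4\Rightarrow 1$ is expected to fail in general, in accordance with Example~\ref{3}.
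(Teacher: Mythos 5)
Your proposal follows the paper's proof almost step for step: the cycle $1\Rightarrow 2\Rightarrow 3\Rightarrow 1$, the trivial $3\Rightarrow 4$, and for $4\Rightarrow 1$ the same splitting trick ($0\to E\to F\to E\to 0$ splits for injective $E$, so $\fd_R(E)\le n$) combined with the coherent characterization $wGgldim(R)=\sup\{\fd_R(E): E \text{ injective}\}$, which is exactly the paper's Lemma~\ref{lemma wG}; your $2\Rightarrow 3$ is likewise the paper's Lemma~\ref{lemma flat}. The one place you deviate is inside $3\Rightarrow 1$: you first promote $\gfd_R(M)<\infty$ to $\gfd_R(M)\le n$ via a ``standard Gorenstein-flat inequality'' for short exact sequences and then invoke Holm's equality $\gfd_R(M)=\sup\{i:\Tor_i^R(M,I)\neq 0\}$. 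Both of those facts are established in Holm's paper only over (right) coherent rings, whereas $3\Rightarrow 1$ is asserted for arbitrary rings, so as written this step borrows more than is available. The paper avoids the issue: it uses only the easy, coherence-free direction ($\gfd_R(M)\le m$ forces $\Tor_i^R(M,I)=0$ for $i>m$ and $I$ injective, by dimension-shifting along a complete flat resolution) and then runs the long exact sequence of $\Tor(-,I)$ along $0\to M\to F\to M\to 0$ to get $\Tor^{n+1}_R(M,I)\cong\Tor^{n+2}_R(M,I)\cong\cdots\cong\Tor^{n+m+1}_R(M,I)=0$. You should replace your detour through $\gfd_R(M)\le n$ by this direct computation; otherwise the argument is the paper's.
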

To prove this Proposition we need the following Lemmas:

\begin{lemma}\label{lemma wG}
Let $R$ be a coherent ring. The following assertions are
equivalent:
\begin{enumerate}
    \item $wGgldim(R)\leq n$
    \item $fd_R(I)\leq n$ for every injective $R$-module $I$.
\end{enumerate}
\end{lemma}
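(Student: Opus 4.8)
The plan is to prove the two implications separately, using throughout the standard facts that $Gfd_R(M)\leq fd_R(M)$ with equality whenever the flat dimension is finite, and that over a coherent ring the Gorenstein flat dimension of a module of finite $Gfd$ is detected by injective test modules (Holm's characterization, $Gfd_R(M)=\sup\{i:Tor^R_i(M,I)\neq 0 \text{ for some injective } I\}$).

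For $1\Rightarrow 2$ I would argue directly on the vanishing of $Tor$. Since $wGgldim(R)\leq n$, every module $M$ satisfies $Gfd_R(M)\leq n$; as all the dimensions in sight are finite, Holm's formula applies and yields $Tor^R_i(M,I)=0$ for all $i>n$, for every module $M$ and every injective $I$. Now fix an injective module $I$. The vanishing $Tor^R_{n+1}(M,I)=0$ for every module $M$ is precisely the statement $fd_R(I)\leq n$, which is $(2)$.

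For $2\Rightarrow 1$ I would reduce to a syzygy computation. Let $M$ be an arbitrary $R$-module, choose a flat resolution $\cdots\to F_1\to F_0\to M\to 0$, and let $K$ be the $n$-th syzygy, so there is an exact sequence $0\to K\to F_{n-1}\to\cdots\to F_0\to M\to 0$ with each $F_j$ flat. Dimension shifting together with $fd_R(I)\leq n$ gives $Tor^R_i(K,I)\cong Tor^R_{i+n}(M,I)=0$ for all $i\geq 1$ and every injective $I$. It then suffices to prove that $K$ is Gorenstein flat, for this forces $Gfd_R(M)\leq n$ for the arbitrary $M$, hence $wGgldim(R)\leq n$.

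The hard part is upgrading the $Tor$-vanishing of $K$ to genuine Gorenstein flatness, that is, producing a complete flat resolution of $K$; this is exactly where coherence is indispensable. My approach would be to pass to the Pontryagin dual $K^{+}=Hom_{\mathbb{Z}}(K,\mathbb{Q}/\mathbb{Z})$ and use the natural isomorphism $Ext^i_R(I,K^{+})\cong (Tor^R_i(K,I))^{+}$, which converts the vanishing above into $Ext^i_R(I,K^{+})=0$ for all $i\geq 1$ and all injective $I$. Dualizing the finite flat resolution of $M$ then realizes $K^{+}$ as an $n$-th cosyzygy in an exact complex of injective modules (each $F_j^{+}$ is injective), while the $Ext$-vanishing supplies the $Hom_R(E,-)$-exactness against injective $E$ required of a complete injective resolution; invoking Holm's theorem that over a coherent ring $K$ is Gorenstein flat precisely when $K^{+}$ is Gorenstein injective would then conclude the argument. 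The main obstacle I anticipate is the bookkeeping in this last step: extending the finite dual sequence to a doubly-infinite complete injective resolution and verifying that every cokernel stays acyclic against injectives, a step that genuinely uses coherence and fails without it.
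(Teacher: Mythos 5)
Your implication $(1)\Rightarrow(2)$ is correct: once every module satisfies $Gfd_R(M)\leq n$, dimension shifting (or Holm's formula, which applies since $R$ is coherent and the dimensions involved are finite) gives $Tor^R_{n+1}(M,I)=0$ for every $M$ and every injective $I$, which is exactly $fd_R(I)\leq n$. The reduction in $(2)\Rightarrow(1)$ is also sound as far as it goes: it does suffice to show that the $n$-th syzygy $K$, which satisfies $Tor^R_i(K,I)=0$ for all $i\geq 1$ and all injective $I$, is Gorenstein flat.

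The gap is in the last step, and it is not bookkeeping --- it is the entire content of the lemma. The sequence you propose to use, $0\to M^{+}\to F_0^{+}\to\cdots\to F_{n-1}^{+}\to K^{+}\to 0$, is exact with injective middle terms, but it is \emph{not} $Hom_R(E,-)$-exact for injective $E$: its kernels are the duals $L_j^{+}$ of the lower syzygies $L_j$ of $M$, and $Ext^i_R(E,L_j^{+})\cong Tor^R_i(L_j,E)^{+}\cong Tor^R_{i+j}(M,E)^{+}$, which has no reason to vanish when $i+j\leq n$; for instance $Ext^1_R(E,L_{n-1}^{+})\cong Tor^R_n(M,E)^{+}$, and the hypothesis $fd_R(E)\leq n$ does not kill $Tor_n$. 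So this sequence cannot be part of a complete injective resolution of $K^{+}$. It also cannot be prolonged to the left, since $M^{+}$ need not be a quotient of any injective module (over $\mathbb{Z}$, which satisfies $(2)$ with $n=1$, take $M=\mathbb{Z}/2\mathbb{Z}$: then $M^{+}\cong\mathbb{Z}/2\mathbb{Z}$ is not a quotient of a divisible group). Thus the construction fails even in cases where the conclusion is true for other reasons.

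More fundamentally, the implication ``$Tor$-vanishing against injectives $\Rightarrow$ Gorenstein flat'' is not available over a general coherent ring: Holm's Theorem 3.14 requires $Gfd_R(K)<\infty$ \emph{a priori}, which is precisely what you are trying to establish, so invoking it here is circular. This is why the paper does not argue directly at all: its proof is a two-line combination of Chen--Ding \cite{Ding2} and Ding--Chen \cite{Ding}, and the real work hidden in those citations uses coherence through different tools (flat preenvelopes, which exist over coherent rings, and the fact that over a coherent ring the character module of an injective module is flat); with these one builds the right-hand flat resolution $0\to K\to F^0\to F^1\to\cdots$ that stays exact under $-\otimes_R I$, which is the half of the complete flat resolution your argument never produces. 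Without some input of this kind, your proof of $(2)\Rightarrow(1)$ stops exactly where the theorem begins.
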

\begin{proof}
Follows by combining the equivalence \cite[Theorem
7$(1\Leftrightarrow 2)$]{Ding2} and the equality \cite[Theorem
3.7(1=2)]{Ding}.
\end{proof}
\begin{lemma}\label{lemma flat}
For every $R$-module $M$ we have $Gfd_R(M)\leq fd_R(M)$ with
equality if $fd_R(M)<\infty$.
\end{lemma}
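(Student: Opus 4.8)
The plan is to prove the two assertions separately: the inequality $Gfd_R(M)\leq fd_R(M)$ holds for every module, whereas the reverse inequality requires the hypothesis $fd_R(M)<\infty$. This is the exact flat analogue of the classical comparison $Gpd_R(M)\leq pd_R(M)$ (with equality when $pd_R(M)<\infty$), so I would model the argument on that situation. The whole statement can alternatively be quoted from the work of Holm and Christensen cited above, but I describe a reasonably self-contained route.

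For the inequality there is nothing to prove when $fd_R(M)=\infty$, so assume $fd_R(M)=n<\infty$ and fix a flat resolution $0\rightarrow F_n\rightarrow\cdots\rightarrow F_0\rightarrow M\rightarrow 0$. Every flat module is Gorenstein flat, so this is a resolution of $M$ by Gorenstein flat modules of length $n$; since the Gorenstein flat dimension is bounded above by the length of any such resolution, we get $Gfd_R(M)\leq n=fd_R(M)$.

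For the equality, assume $fd_R(M)<\infty$, so that by the first part $m:=Gfd_R(M)\leq fd_R(M)<\infty$, and it remains to show $fd_R(M)\leq m$. The clean way is to use an approximation sequence: since $Gfd_R(M)=m<\infty$, there is a short exact sequence $0\rightarrow M\rightarrow H\rightarrow G\rightarrow 0$ in which $G$ is Gorenstein flat and $fd_R(H)=m$. Because $fd_R(M)<\infty$ and $fd_R(H)=m<\infty$, the long exact sequence for $Tor$ forces $fd_R(G)<\infty$. Granting the key lemma below, $G$ is then \emph{flat}, and feeding $G$ flat back into the sequence $0\rightarrow M\rightarrow H\rightarrow G\rightarrow 0$ gives injections $Tor^R_i(M,A)\hookrightarrow Tor^R_i(H,A)$ for all $i\geq 1$ and all $A$, whence $fd_R(M)\leq fd_R(H)=m$. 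Combined with $m\leq fd_R(M)$ this yields equality.

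The crux is therefore the key lemma: \emph{a Gorenstein flat module $N$ with $fd_R(N)<\infty$ is flat}. From the complete flat resolution defining $N$ one reads off that $Tor^R_i(N,I)=0$ for all $i\geq 1$ and every injective module $I$. Suppose $fd_R(N)=k\geq 1$ and pick $A$ with $Tor^R_k(N,A)\neq 0$; embed $A$ into an injective module, giving $0\rightarrow A\rightarrow I\rightarrow A'\rightarrow 0$. The associated long exact sequence contains the portion $Tor^R_{k+1}(N,A')\rightarrow Tor^R_k(N,A)\rightarrow Tor^R_k(N,I)$, in which the left term vanishes because $fd_R(N)=k$ and the right term vanishes because $N$ is Gorenstein flat and $I$ is injective; hence $Tor^R_k(N,A)=0$, a contradiction. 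Therefore $k=0$ and $N$ is flat. The delicate point I expect to be the main obstacle is not this lemma, whose proof is elementary, but the structural input used for the equality, namely the existence of the approximation sequence $0\rightarrow M\rightarrow H\rightarrow G\rightarrow 0$ with $G$ Gorenstein flat and $fd_R(H)=Gfd_R(M)$; over a general (noncoherent) ring one must either appeal to the established machinery for the Gorenstein flat dimension or restrict attention to the coherent case, so I would either cite Holm and Christensen for this step or carry it out under the coherence hypothesis already present elsewhere in the paper.
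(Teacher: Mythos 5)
Your inequality argument is the same as the paper's (flat modules are Gorenstein flat, so a flat resolution of length $n$ is a Gorenstein flat resolution of length $n$), and your ``key lemma'' together with its proof --- embed a test module into an injective and use the long exact sequence to push the nonvanishing $Tor$ up one degree, against the $Tor$-vanishing of Gorenstein flat modules on injectives --- is exactly the mechanism of the paper's own proof. The genuine gap is the step you yourself flag: the approximation sequence $0\rightarrow M\rightarrow H\rightarrow G\rightarrow 0$ with $G$ Gorenstein flat and $fd_R(H)=Gfd_R(M)$. That existence result (\cite[Theorem 3.23]{Holm} and its companions) is proved only for (right) coherent rings, because it rests on the class of Gorenstein flat modules being resolving/closed under extensions, which at the time was known only under coherence. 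But the lemma is stated for an \emph{arbitrary} ring, and the paper really uses it in that generality: it is invoked in the proof of Proposition \ref{prop2} inside the implications $1\Leftrightarrow 2\Leftrightarrow 3$, which carry no coherence hypothesis (coherence enters that proposition only for $4\Rightarrow 1$). So of your two proposed fixes, ``restrict to the coherent case'' proves a statement too weak for the paper's purposes, and ``appeal to established machinery'' has nothing to appeal to in the non-coherent setting.

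The fix is to discard the approximation sequence and run your key-lemma trick on $M$ itself, which is what the paper does. Suppose $fd_R(M)=n>0$ and, for contradiction, $Gfd_R(M)<n$. By definition of the Gorenstein flat dimension, $M$ has a resolution of length $n-1$ by Gorenstein flat modules; since each of these has vanishing $Tor$ against injectives, dimension shifting gives $Tor_R^{n}(M,I)=0$ for every injective $I$ (no approximation theory, no coherence). Now choose $N$ with $Tor_R^n(M,N)\neq 0$; by the vanishing just noted, $N$ is not injective, so embed it in an injective module, $0\rightarrow N\rightarrow I\rightarrow I/N\rightarrow 0$, and the long exact sequence yields $Tor_R^{n+1}(M,I/N)\cong Tor_R^n(M,N)\neq 0$, contradicting $fd_R(M)=n$. (One caveat if you compare with the printed proof: the paper takes an $(n-1)$-step \emph{flat} resolution and asserts its kernel is ``clearly'' Gorenstein flat, which is itself a resolving-class property requiring coherence in the literature of that time; taking a resolution by Gorenstein flat modules, as above, sidesteps that as well.) In short, your proposal is correct over coherent rings but, as it stands, does not establish the lemma in the generality in which it is stated and used; the ingredient you treat as a lemma is in fact the whole proof once applied directly to $M$.
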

\begin{proof} In first note that $Gfd_R(M)\leq m$ implies that
$Tor_R^i(M,I)=0$ for every injective $R$-module $I$ and each
$i>m$. Indeed, consider an $m$-step flat resolution of $M$ as
follows:
$$0\rightarrow G\rightarrow F_m\rightarrow ....\rightarrow
F_1\rightarrow M\rightarrow 0$$ Clearly $G$ is Gorenstein flat.
Then, by \cite[Theorem 3.6]{Holm}, $Tor_R^i(G,I)=0$ for every
injective $R$-module $I$ and each $i>0$. Therefore,
$Tor_R^{i+m}(M,I)=0$, as
desired.\\
 The first inequality of this Lemma  holds from the fact that every flat module
is Gorenstein flat. Now, suppose that $fd_R(M)=n>0$ and suppose by
absurd that $Gfd_R(M)<n$. Thus, there exists an $R$-module $N$
such that $Tor^n_R(M,N)\neq 0$. Note  that $N$ can not be
injective (since $Gfd_R(M)<n$). Hence, pick a short exact sequence
$0\rightarrow N \rightarrow I \rightarrow I/N\rightarrow 0$ where
$I$ is injective. Then, $0\neq Tor_R^n(M,N)=Tor^{n+1}_R(M,I/N)$.
Absurd since $fd_R(M)=n$. This contradiction finish the proof.
\end{proof}

\begin{proof}[Proof of Proposition \ref{prop2}]
$1\Rightarrow 2.$ Follows from the fact that every strongly
$n$-Gorenstein flat module has a Gorenstein flat dimension $\leq
n$ (by \cite[Proposition 3.2(1)]{MahdouTamekkante}).\\
$2\Rightarrow 3.$ Let $F$ be an $R$-module such that
$fd_R(M)<\infty$. We have $Gf_R(F)\leq n$ since $wGgldim(R)\leq
n$. Then, by Lemma \ref{lemma flat}, $fd_R(F)\leq n$. Hence, the
following implication is immediate.\\
$3\Rightarrow 1$ Let $M$ be an arbitrary $R$-module. For such
module there is an exact sequence $(\star)\quad 0\rightarrow
M\rightarrow F \rightarrow M \rightarrow 0$ where $fd_R(F)\leq n$.
On the other hand, $m:=Gfd_R(M)<\infty$ since $wGgldim(R)<\infty$.
Thus, from the note in the proof of Lemma \ref{lemma flat}, for
every injective $R$-module $I$ we have  $Tor^i_R(M,I)=0$ for all
$i>m$. So, from $(\star)$, we have
$Tor^{n+1}_R(M,I)=Tor^{n+2}(M,I)=...=Tor^{n+m+1}(M,I)=0$ since
$n+m+1>m$. Thus, $M$ is a strongly $n$-Gorenstein flat module, as
desired.\\
$3\Rightarrow 4.$ Obvious.\\
Now assume that $R$ is coherent.\\
$4\Rightarrow 1.$  Let $I$ be an injective $R$-module. By
hypothesis, there is an exact sequence $0\rightarrow I \rightarrow
F \rightarrow I\rightarrow 0$ where $fd_R(F)\leq n$. Clearly this
exact sequence splits. Thus, $I\oplus I\cong F$. Hence,
$fd_R(I)\leq n$. Consequently, from Lemma \ref{lemma wG} and since
$R$ is coherent, $wGgldim(R)\leq n$. Thus, for each $R$-module $M$
and every injective $R$-module $I$ we have $Tor_R^{n+1}(M,I)=0$
(since $Gfd_R(M)\leq n$). Thus, adding this fact to the hypothesis
condition, we conclude that every $R$-module is strongly
$n$-Gorenstein flat as desired.
\end{proof}
\begin{remark} Let $R$ be an $n$-SG ring (resp., $n$-wSG ring).
We have:
\begin{enumerate}
    \item From Propositions \ref{prop1} and
    \ref{prop2}, $Ggldim(R)\leq n$ (resp., $wGgldim(R)\leq
    n$).
    \item Using \cite[Corollary 1.2]{Bennis and Mahdou2},
    $gldim(R)\leq n$ (resp., $wdim(R)\leq n$) if, and only if, $wdim(R)<\infty$.
\end{enumerate}
\end{remark}

Recall that a ring $R$ is called perfect if, every flat $R$-module
is projective.

\begin{theorem}\label{thm1} Every $n$-SG ring is $n$-wSG with equivalence in the following two cases:
\begin{enumerate}
    \item  $R$ is Noetherian.
    \item $R$ is perfect with finite Gorenstein global dimension.
\end{enumerate}
\end{theorem}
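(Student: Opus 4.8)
The plan is to treat the forward implication and the two converses separately, using Propositions \ref{prop1} and \ref{prop2} to pass between the module-theoretic conditions and the global invariants $Ggldim(R)$ and $wGgldim(R)$. For ``$n$-SG $\Rightarrow$ $n$-wSG'', I would fix an arbitrary $R$-module $M$ and produce, via Proposition \ref{prop1}, a short exact sequence $0\rightarrow M\rightarrow P\rightarrow M\rightarrow 0$ with $pd_R(P)\leq n$. Since projective modules are flat, $fd_R(P)\leq pd_R(P)\leq n$, so this same sequence supplies the flat witness required in the definition of strongly $n$-Gorenstein flat. It then remains to verify $Tor^{n+1}_R(M,I)=0$ for every injective $I$. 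For this I would combine $Ggldim(R)\leq n$ (Proposition \ref{prop1}) with the inequality $wGgldim(R)\leq Ggldim(R)$ from \cite{Bennis and Mahdou2} to obtain $Gfd_R(M)\leq n$, and then apply the note established in the proof of Lemma \ref{lemma flat}, which gives $Tor^i_R(M,I)=0$ for all $i>n$. Hence every $M$ is strongly $n$-Gorenstein flat and $R$ is $n$-wSG.

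For the converse when $R$ is perfect with $Ggldim(R)<\infty$, I would start from the flat witness $0\rightarrow M\rightarrow F\rightarrow M\rightarrow 0$ with $fd_R(F)\leq n$ provided by Proposition \ref{prop2}. Over a perfect ring every flat module is projective, so a flat resolution of $F$ of length $n$ is already a projective resolution; thus $pd_R(F)\leq n$. Together with the hypothesis $Ggldim(R)<\infty$, this is exactly assertion (3) of Proposition \ref{prop1}, whence $R$ is $n$-SG.

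For the converse when $R$ is Noetherian, I would again take the sequence $0\rightarrow M\rightarrow F\rightarrow M\rightarrow 0$ with $fd_R(F)\leq n$ from Proposition \ref{prop2}, noting that the $n$-wSG hypothesis forces $wGgldim(R)\leq n<\infty$. The aim is to meet assertion (2) of Proposition \ref{prop1}, which needs $Ggldim(R)\leq n$ together with a sequence of this shape having $pd_R(F)<\infty$. For the bound on $Ggldim(R)$ I would use that $Ggldim(R)=wGgldim(R)$ for Noetherian rings; for the upgrade of the witness I would use that a Noetherian ring with $Ggldim(R)<\infty$ is Iwanaga--Gorenstein, hence of finite Krull dimension, so that every flat module has finite projective dimension and therefore $fd_R(F)<\infty$ implies $pd_R(F)<\infty$. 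Proposition \ref{prop1} then yields that $R$ is $n$-SG.

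I expect the Noetherian converse to be the main obstacle, since it rests on two transfer principles that are not formal consequences of the definitions: the equality $Ggldim(R)=wGgldim(R)$ and the passage from finite flat dimension to finite projective dimension for the witness $F$. The cleanest way around this is to identify Noetherian rings of finite Gorenstein global dimension with Iwanaga--Gorenstein rings and to invoke the classical coincidence of the finiteness of the flat, projective and injective dimensions over such rings; this is precisely what lets the flat witness be re-used as a projective one. By contrast, the forward implication needs only the monotonicity $wGgldim(R)\leq Ggldim(R)$, and the perfect case is immediate because ``flat $=$ projective'' converts the resolution directly.
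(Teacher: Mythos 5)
Your argument is correct and its skeleton matches the paper's: both directions pass through Propositions \ref{prop1} and \ref{prop2}, the forward implication rests on $wGgldim(R)\leq Ggldim(R)$ together with the observation that the projective witness $0\to M\to P\to M\to 0$ already serves as the flat witness, and the converses rest on recovering $Ggldim(R)\leq n$ from $wGgldim(R)\leq n$ and upgrading the flat witness to a projective one. The genuine divergence is in how that upgrade is performed. The paper does it uniformly for both cases by citing \cite[Corollary 2.7]{Bennis and Mahdou2}, which says that over a ring with $Ggldim(R)\leq n$ every module of finite flat dimension has projective dimension $\leq n$; this yields the sharper bound $pd_R(F)\leq n$ at once. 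You split the two cases: for perfect rings you use flat $=$ projective to get $pd_R(F)\leq n$ directly, which is cleaner and even lets you invoke assertion (3) of Proposition \ref{prop1} using only the assumed finiteness of $Ggldim(R)$, bypassing the equality $wGgldim(R)=Ggldim(R)$ altogether; for Noetherian rings you identify the ring as Iwanaga--Gorenstein, deduce finite Krull dimension, and invoke the Raynaud--Gruson/Jensen theorem that flat modules over such rings have finite projective dimension. That route is valid (and assertion (2) of Proposition \ref{prop1} only needs $pd_R(F)<\infty$), but it imports noticeably heavier external machinery than the one-line appeal to Corollary 2.7 that the paper uses, which is itself just the standard coincidence of finite flat, projective and injective dimension classes over rings of finite Gorenstein global dimension.
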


To prove this theorem we need the following Lemma.
\begin{lemma}\label{wg<g}
For any arbitrary ring $R$ we have: $wGgldim(R)\leq Ggldim(R)$
with equality in the following tow cases:
\begin{enumerate}
    \item  $R$ is Noetherian.
    \item $R$ is perfect with finite Gorenstein global dimension.
\end{enumerate}
\end{lemma}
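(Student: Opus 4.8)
The plan is to prove the inequality $wGgldim(R)\le Ggldim(R)$ for arbitrary $R$, and then to obtain equality in the two special cases by proving the reverse inequality $Ggldim(R)\le wGgldim(R)$. For the general inequality I would argue pointwise: for every $R$-module $M$ one has $Gfd_R(M)\le Gpd_R(M)$, and taking the supremum over all $M$ gives the claim. The pointwise estimate follows from the fact that every Gorenstein projective module is Gorenstein flat \cite{Holm}; indeed, if $Gpd_R(M)=m<\infty$, then a Gorenstein projective resolution $0\to G_m\to\cdots\to G_0\to M\to 0$ is in particular a resolution of $M$ by Gorenstein flat modules, so $Gfd_R(M)\le m$ (and there is nothing to prove when $Gpd_R(M)=\infty$).

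For the equalities it therefore suffices to establish $Ggldim(R)\le wGgldim(R)$ in each case. If $wGgldim(R)=\infty$ this is trivial, so I assume $wGgldim(R)=n<\infty$; then, by the general inequality, $Gfd_R(M)\le n$ for every $M$. In the \emph{perfect} case I would control the gap between the projective and flat Gorenstein dimensions through the invariant $fPD(R)=\sup\{pd_R(F):F\text{ flat}\}$. Since $R$ is perfect, every flat module is projective, so $fPD(R)=0$; combining this with the Gorenstein analogue of the classical bound $pd_R(M)\le fd_R(M)+fPD(R)$ (obtained by resolving the flat terms of a finite Gorenstein flat resolution by projectives) yields $Gpd_R(M)\le Gfd_R(M)\le n$ for every $M$. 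Hence $Ggldim(R)\le n=wGgldim(R)$, the hypothesis $Ggldim(R)<\infty$ guaranteeing that the dimensions involved are finite so that the bound applies.

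In the \emph{Noetherian} case the invariant $fPD(R)$ may be infinite, so instead I would imitate the classical proof that $gldim(R)=wdim(R)$ for Noetherian rings. Over a Noetherian ring the three Gorenstein dimensions agree on finitely generated modules, $Gpd_R(M)=Gfd_R(M)=Gdim_R(M)$ \cite{Christensen}, and $Ggldim(R)$ is already attained on finitely generated modules, $Ggldim(R)=\sup\{Gpd_R(M):M\text{ finitely generated}\}$ (the Gorenstein counterpart of the fact that $gldim$ is computed on the cyclic modules $R/\mathfrak{a}$). Combining these, $Ggldim(R)=\sup\{Gfd_R(M):M\text{ finitely generated}\}\le wGgldim(R)$. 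Equivalently, one may invoke the structure theory: finiteness of $wGgldim(R)$ forces $R$ to be Iwanaga--Gorenstein, whence $Ggldim(R)=wGgldim(R)=id_R(R)$.

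The heart of the matter is the reverse inequality $Ggldim(R)\le wGgldim(R)$. Bounding the Gorenstein projective dimension \emph{above} by the Gorenstein flat dimension is false over a general ring---this is precisely the obstruction behind the failure of the converse in Theorem \ref{thm1} (cf. Example \ref{3})---and it is rescued only by the extra structure: perfectness collapses flat modules to projective ones ($fPD(R)=0$), while the Noetherian hypothesis permits a reduction to finitely generated modules, on which the flat and projective Gorenstein dimensions coincide. Matching these two arguments to the precise forms of the cited results (the inequality $Gpd\le Gfd+fPD$ for the perfect case, and the finitely-generated computation of $Ggldim$ for the Noetherian case) is the step that will require the most care.
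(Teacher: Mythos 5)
Your top-level architecture is the same as the paper's (reduce the inequality to ``Gorenstein projective $\Rightarrow$ Gorenstein flat'', and the equalities to a reverse comparison), but two of your three key steps rest on claims that are not actually available. For the general inequality you invoke ``every Gorenstein projective module is Gorenstein flat'' over an arbitrary ring, citing \cite{Holm}; Holm proves this only for right coherent rings with finite finitistic projective dimension (his Proposition 3.4), and the unconditional statement was not known. The paper gets around this by observing that the inequality is vacuous unless $Ggldim(R)<\infty$, and that this \emph{global} hypothesis forces every injective module $I$ to have finite flat dimension (by \cite[Corollary 2.7]{Bennis and Mahdou2}); then the character module $I^{\ast}=Hom_{\mathbb{Z}}(I,\mathbb{Q}/\mathbb{Z})$ satisfies $id_R(I^{\ast})=fd_R(I)<\infty$, hence $pd_R(I^{\ast})<\infty$, hence $Hom_R(\mathbf{P},I^{\ast})$ is exact for any complete projective resolution $\mathbf{P}$, and adjointness gives exactness of $I\otimes_R\mathbf{P}$. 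This use of the global finiteness of $Ggldim(R)$ is exactly what your pointwise reduction discards, and without it the step does not go through.

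The perfect case has a similar problem: the ``Gorenstein analogue'' $Gpd_R(M)\le Gfd_R(M)+fPD(R)$ is asserted rather than proved, and your justification (``resolving the flat terms of a finite Gorenstein flat resolution by projectives'') does not work as stated---the terms of a Gorenstein flat resolution are Gorenstein flat, not flat, so there is nothing to resolve by projectives, and perfectness by itself only converts flat modules into projective ones without comparing the two Gorenstein dimensions. The actual content, which the paper supplies, is that over a perfect ring with $n:=Ggldim(R)<\infty$ every Gorenstein flat module $M$ is already Gorenstein projective: take the right-hand portion $0\to M\to F_1\to\cdots\to F_n\to G\to 0$ of a complete flat resolution, note that each $F_i$ is projective by perfectness and that $Gpd_R(G)\le n$ by the global bound, and conclude by \cite[Theorem 2.20]{Holm}. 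Note that here, too, the finiteness of $Ggldim(R)$ is indispensable, not merely a technical convenience. Your Noetherian case is essentially fine: your second suggested route (finiteness of $wGgldim(R)$ forces $R$ to be Iwanaga--Gorenstein) is precisely the paper's appeal to \cite[Theorem 12.3.1]{Enocks and janda}, while your first route via finitely generated modules is plausible but would require you to actually verify the two nontrivial inputs you name.
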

\begin{proof}
To prove the desired inequality, the standard argument shows that
it suffices to prove that every Gorenstein projective $R$-module
is Gorenstein flat provided $Ggldim(R)<\infty$. So, let
$\mathbf{P}$ be a complete projective resolution. We have to prove
that $I\otimes_R \mathbf{P}$ is exact for every injective
$R$-module $I$. Since $Ggldim(R)<\infty$ and by \cite[Corollary
2.7]{Bennis and Mahdou2} we have $fd_R(I)<\infty$. Let
$I^{\ast}:=Hom_{\mathbb{Z}}(I,\mathbb{Q}/\mathbb{Z})$ be the
character of $I$. By \cite[Theorem 3.52]{Rotman},
$id_R(I^{\ast})=fd_R(I)<\infty$. Again, by \cite[Corollary
2.7]{Bennis and Mahdou2}, $pd_R(I^{\ast})$ is finite.
Consequently, by \cite[Proposition 2.3]{Holm},
$Hom_R(\mathbf{P},I^{\ast})$ is exact. By adjointness,
$Hom_{\mathbb{Z}}(I\otimes_R\mathbf{P},\mathbb{Q}/\mathbb{Z})=
Hom_R(\mathbf{P},I^{\ast})$. Then, $I\otimes_R\mathbf{P}$ is
exact, as desired.\\
If $R$ is Noetherian, the converse inequality follows from the
equivalence  \cite[Theorem 12.3.1$(3\Leftrightarrow 4)$]{Enocks and janda}.\\
Now suppose that $R$ is perfect with $n:=Ggldim(R)<\infty$. We
prove that every Gorenstein flat $R$-module is Gorenstein
projective. Let $M$ be an arbitrary Gorenstein flat module. By
definition we can pick an $n$-step right flat resolution as
follows:
$$0\rightarrow M \rightarrow F_1\rightarrow
F_1\rightarrow...\rightarrow F_n\rightarrow G$$ where all $F_i$
are flat and so projective since $R$ is perfect. But $Gpd_R(G)\leq
n$. Thus, using the equivalence \cite[Theorem
2.20($i\Leftrightarrow iv$)]{Holm}, we conclude that $M$ is
Gorenstein projective as desired. Consequently, $wGgldim(R)\leq
Ggldim(R)$ and this finish the proof.
\end{proof}
\begin{proof}[Proof of Theorem \ref{thm1}]
Let $R$ be an $n$-SG ring. Clearly $Ggldim(R)\leq n$. Then, by
Lemma \ref{wg<g}, $wGgldim(R)\leq n$. Now, let $M$ be an arbitrary
$R$-module.  By hypothesis, $M$ is strongly $n$-Gorenstein
projective. Then, there is a short exact sequence $0\rightarrow M
\rightarrow P \rightarrow M \rightarrow 0$ where $pd_R(M)\leq n$.
So, by Proposition \ref{prop2}, $R$ is an $n-wSG$ ring, as
desired.\\
Now, let $R$ be an $n$-wSG ring. Then, $wGgldim(R)\leq n$. So, if
$R$ is Noetherian or perfect with finite Gorenstein global
dimension then $Ggldim(R)\leq n$ (by Lemma \ref{wg<g}). Now, let
$M$ be an arbitrary $R$-module. By hypothesis, there is an exact
sequence $\rightarrow M\rightarrow F \rightarrow M \rightarrow 0$
where $fd_R(F)\leq n$. Using \cite[Corollary 2.7]{Bennis and
Mahdou2}, we have $pd_R(F)\leq n$. So, by Proposition \ref{prop1},
$R$ is an $n$-SG ring, as desired.
\end{proof}

\begin{theorem}\label{product}
Let $\displaystyle\{R_i\}_{i=1}^m$ be a family of rings and set
$R:=\displaystyle\prod_{i=1}^m R_i$. Then,
 $R$ is an $n$-SG ring if, and only if, $R_i$ is an $n$-SG ring
    for each $i\in I$.\\
    Moreover, if  $R_i$ is Coherent for each $i$, then $R$ is an $n$-wSG ring if, and only if, $R_i$ is an $n$-wSG ring
    for each $i\in I$.
\end{theorem}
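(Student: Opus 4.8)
The plan is to exploit the canonical decomposition of $R$-modules coming from the complete set of orthogonal idempotents $e_1,\dots,e_m$ attached to the factors $R_i$. Writing $M_i:=e_iM$, every $R$-module $M$ splits as $M=\bigoplus_{i=1}^m M_i$ with $M_i$ an $R_i$-module, and this identification realizes an equivalence between $R$-modules and $m$-tuples of $R_i$-modules. Under this equivalence I would first record the homological bookkeeping: an $R$-module is projective (resp.\ injective, resp.\ flat) if and only if each component $M_i$ is projective (resp.\ injective, resp.\ flat) over $R_i$; consequently $pd_R(M)=\max_i pd_{R_i}(M_i)$ and $fd_R(M)=\max_i fd_{R_i}(M_i)$, and the derived functors decompose as $Ext^{k}_R(M,N)=\bigoplus_i Ext^{k}_{R_i}(M_i,N_i)$ and $Tor_k^R(M,N)=\bigoplus_i Tor_k^{R_i}(M_i,N_i)$ for every $k$.

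With these facts in hand, the first equivalence reduces to the observation that strong $n$-Gorenstein projectivity is a componentwise property. Given a short exact sequence $0\to M\to P\to M\to 0$ over $R$, applying $e_i$ yields $0\to M_i\to P_i\to M_i\to 0$ over $R_i$, and $pd_R(P)\leq n$ holds if and only if $pd_{R_i}(P_i)\leq n$ for all $i$; moreover, since the projective $R$-modules are exactly the $\bigoplus_i Q_i$ with $Q_i$ projective, the vanishing $Ext_R^{n+1}(M,Q)=0$ for all projective $Q$ is equivalent to $Ext_{R_i}^{n+1}(M_i,Q_i)=0$ for all projective $Q_i$ and all $i$. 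Hence $M$ is strongly $n$-Gorenstein projective over $R$ if and only if each $M_i$ is strongly $n$-Gorenstein projective over $R_i$. For the forward implication I would view any $R_i$-module $N$ as an $R$-module through the projection $R\to R_i$ (so that its $j$-th component vanishes for $j\neq i$) and apply the hypothesis; for the converse I would assemble the componentwise sequences into their direct sum. This proves that $R$ is $n$-SG if and only if every $R_i$ is $n$-SG.

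The weak statement is handled by the identical pattern, replacing $pd$ by $fd$, projective test modules by injective ones, and $Ext$ by $Tor$: the point is again that strong $n$-Gorenstein flatness is componentwise, using that the injective $R$-modules are precisely the $\bigoplus_i I_i$ with $I_i$ injective over $R_i$, so that $Tor_{n+1}^R(M,I)=0$ for all injective $I$ is equivalent to the corresponding vanishing over each factor. The coherence hypothesis enters to keep the argument inside the framework of Proposition \ref{prop2}: a finite product of coherent rings is coherent, so that $R$ is coherent together with all the $R_i$, and the flat-dimension characterization of $n$-wSG rings is available on both sides. The step I expect to demand the most care is verifying the decomposition of the injective modules and of $Tor$ over the product --- in particular, checking that the injective test objects appearing in the definition of strong $n$-Gorenstein flatness over $R$ match, factor by factor, those over the $R_i$ --- since the $Tor$-vanishing condition, unlike the flat-dimension condition, is not self-evidently local to the factors until this correspondence is established.
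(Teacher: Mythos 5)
Your proof is correct, but it takes a genuinely different route from the paper's. The paper argues by induction on $m$, reducing to $m=2$; it produces the required short exact sequences by essentially the same idempotent/restriction-of-scalars manipulation you use (applying $-\otimes_{R_1\times R_2}R_1$ and decomposing $M\cong M_1\times M_2$), but it handles the $Ext$- and $Tor$-vanishing conditions only indirectly: it invokes Propositions \ref{prop1} and \ref{prop2}, which require control of the (weak) Gorenstein global dimension, and for that it imports the product formula $Ggldim(R_1\times R_2)=\sup\{Ggldim(R_i)\}$, its coherent analogue for $wGgldim$, and the product formula for projective dimension from the cited literature. You instead verify the definitions of strongly $n$-Gorenstein projective and strongly $n$-Gorenstein flat modules componentwise, using the decomposition of projectives, injectives, $Ext$ and $Tor$ over a finite product of rings; this makes the proof self-contained at the level of the definitions and avoids the Gorenstein global dimension machinery altogether. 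Two remarks. First, the step you flag as delicate is in fact routine: for a finite product $R=\prod_i R_i$, the decomposition of injective modules and of $Tor$ is exactly as standard as that of projectives and $Ext$, so nothing beyond what you already recorded is needed. Second, and more interestingly, your argument never actually uses Proposition \ref{prop2}, so the coherence hypothesis you retain is superfluous in your approach: the paper needs coherence only because its route requires the product formula for $wGgldim$, which it has only for coherent rings, whereas your componentwise verification of strong $n$-Gorenstein flatness proves the ``Moreover'' statement for arbitrary finite products --- a genuine strengthening of the theorem as stated.
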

\begin{proof}
By induction on $m$ it suffices to prove the assertion for $m=2$.
First suppose that $R_1\times R_2$ is $n$-SG ring. We claim that
$R_1$ is an $n$-SG ring. Let $M$ be an arbitrary $R_1$ module.
$M\times 0$ can be viewed as an $R_1\times R_2$-module. For such
module and since $R_1\times R_2$ is an $n$-SG ring, there is an
exact sequence $0\rightarrow M\times 0\rightarrow P \rightarrow
M\times 0\rightarrow 0$ where $pd_{R_1\times R_2}(P)\leq n$. Thus,
since $R_1$ is a projective $R_1\times R_2$ module, by applying
$-\otimes_{R_1\times R_2}R_1$ to the sequence above, we find the
exact sequence of $R$-modules: $0\rightarrow M\times
0\otimes_{R_1\times R_2}R_1\rightarrow P\otimes_{R_1\times
R_2}R_1\rightarrow M\times 0\otimes_{R_1\times R_2}R_1\rightarrow
0$. Clearly $pd_{R_1}(P\otimes_{R_1\times R_2}R_1)\leq
pd_{R_1\times R_2}(P)\leq n$. Moreover, we have the isomorphism of
$R$-modules: $M\times 0\otimes_{R_1\times R_2}R_1\cong M\times
0\otimes_{R_1\times R_2}(R_1\times R_2)/(0\times R_2)\cong M$.
Thus, we obtain an exact sequence of $R$-module with the form:
$0\rightarrow M\rightarrow P\otimes_{R_1\times R_2}R_1\rightarrow
M\rightarrow 0$. On the other hand, by \cite[Theorem 3.1]{Bennis
and Mahdou3}, we have $Ggldim(R_1)\leq Ggldim(R_1\times R_2)\leq
n$. Thus, using
Proposition \ref{prop1}, $R_1$ is an $n$-SG ring, as desired.\\
By the same argument, $R_2$ is also an $n$-SG ring.\\
Now, suppose that $R_1$ and $R_2$ are $n$-SG rings and we claim
that $R_1\times R_2$ is an $n$-SG ring. Let $M$ be an $R_1\times
R_2$-module. We have $$M\cong M\otimes_{R_1\times R_2}(R_1\times
R_2)\cong M\otimes_{R_1\times R_2}((R_1\times 0)\oplus (R_2\times
0))\cong M_1\times M_2$$ where $M_i=M\otimes_{R_1\times R_2}R_i$
for $i=1,2$. For each $i=1,2$, there is an exact sequence
$0\rightarrow M_i\rightarrow P_i\rightarrow M_i\rightarrow 0$
where $pd_{R_i}(P_i)\leq n$ since $R_i$ is an $n$-SG ring. Thus,
we have the exact sequence of $R_1\times R_2$-modules:
$0\rightarrow M_1\times M_2\rightarrow P_1\times P_2\rightarrow
M_1\times M_2\rightarrow 0$. On the other hand, $pd_{R_1\times
R_2}(P_1\times P_2)=sup\{pd_{R_i}(P_i)\}_{1,2}\leq n$ (by
\cite[Lemma 2.5(2)]{Mahdou costa}). Moreover, from \cite[Theorem
3.1]{Bennis and Mahdou3}, $Ggldim(R_1\times
R_2)=sup\{Ggldim(R_i)\}_{1,2}\leq n$. Thus, from Proposition
\ref{prop1}, $R_1\times R_2$ is an $n$-SG ring, as desired.\\
If $R_i$ is coherent for each $i=1,2$, by \cite[Thoerem
2.4.3]{Glaz}, $R_1\times R_2$ is coherent. So, using \cite[Theorem
3.5 and Lemma 3.7]{Bennis and Mahdou3}, by the same reasoning that
above, we prove the result for the $n$-wSG rings.
\end{proof}
Let $T:=R[X_1,X_2,...,X_n]$ the polynomial ring in $n$
indeterminates over $R$. If we suppose that $T$ is an $m$-SG ring,
it is easy to see by \cite[Theorem 2.1]{Bennis and Mahdou3}, that
$n\leq m$.

\begin{theorem}\label{R[X]1}
If $R[X_1,X_2,...,X_n]$ is an $m$-SG ring, then $R$ is an
$(m-n)$-SG ring.
\end{theorem}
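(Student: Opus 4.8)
The plan is to reduce at once to the case of a single indeterminate and then iterate. Writing $R[X_1,\ldots,X_n]=(R[X_1,\ldots,X_{n-1}])[X_n]$, it suffices to establish the statement ``$S[X]$ an $m$-SG ring implies $S$ is an $(m-1)$-SG ring'' for every ring $S$; an induction on the number of indeterminates then gives the general claim. So assume $R[X]$ is $m$-SG, and let us aim to verify condition $(2)$ of Proposition \ref{prop1} for $R$ with index $m-1$: we must produce the dimension bound $Ggldim(R)\leq m-1$ together with, for every $R$-module $M$, a short exact sequence $0\rightarrow M\rightarrow P\rightarrow M\rightarrow 0$ with $pd_R(P)<\infty$. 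The dimension inequality is the easy half: since $R[X]$ is $m$-SG we have $Ggldim(R[X])\leq m$ (by Proposition \ref{prop1}), and combining this with the polynomial formula $Ggldim(R[X])=Ggldim(R)+1$ of \cite[Theorem 2.1]{Bennis and Mahdou3} (the same input used above to see $n\leq m$) yields $Ggldim(R)\leq m-1$, in particular $Ggldim(R)<\infty$.

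For the short exact sequence the key idea is \emph{not} to regard $M$ as an $R[X]$-module via $X\mapsto 0$ (on which $X$ acts as zero), but to pass to the base change $M[X]:=R[X]\otimes_R M$, on which multiplication by $X$ is injective. Since $R[X]$ is $m$-SG, there is a short exact sequence of $R[X]$-modules $0\rightarrow M[X]\rightarrow Q\rightarrow M[X]\rightarrow 0$ with $pd_{R[X]}(Q)\leq m$. I would then apply $-\otimes_{R[X]}R$, where $R=R[X]/(X)$. Using the free resolution $0\rightarrow R[X]\stackrel{X}{\rightarrow} R[X]\rightarrow R\rightarrow 0$ one computes $Tor_{R[X]}^1(M[X],R)=\Ker(X:M[X]\rightarrow M[X])=0$, so tensoring the sequence down remains short exact; since $M[X]\otimes_{R[X]}R\cong M$, this produces a short exact sequence of $R$-modules $0\rightarrow M\rightarrow P\rightarrow M\rightarrow 0$ with $P:=Q/XQ$.

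The remaining task will be to bound $pd_R(P)$, and this is the step to handle with care. Applying the snake lemma to multiplication by $X$ on the sequence $0\rightarrow M[X]\rightarrow Q\rightarrow M[X]\rightarrow 0$ shows that $X$ is a nonzerodivisor on $Q$, since it is one on both the sub- and the quotient-module $M[X]$. With $X$ regular on $Q$, a standard change of rings theorem (see \cite{Rotman}) gives $pd_R(Q/XQ)\leq pd_{R[X]}(Q)\leq m$, so $pd_R(P)$ is finite. Note that this bound of $m$, rather than the sharp $m-1$, is harmless: Proposition \ref{prop1}$(2)$ only demands $pd_R(P)<\infty$, the sharp control coming entirely from the separately established inequality $Ggldim(R)\leq m-1$. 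This is the main subtlety of the argument, namely resisting the temptation to chase the optimal projective-dimension bound and instead letting the Gorenstein global dimension carry that weight. With both conditions of Proposition \ref{prop1}$(2)$ in hand, $R$ is $(m-1)$-SG, and the induction on the number of indeterminates completes the proof.
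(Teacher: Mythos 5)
Your proof follows essentially the same route as the paper's: reduce to one indeterminate by induction, base-change $M$ to $M[X]=M\otimes_RR[X]$, use the $m$-SG hypothesis on $R[X]$ to obtain $0\rightarrow M[X]\rightarrow Q\rightarrow M[X]\rightarrow 0$ with $pd_{R[X]}(Q)\leq m$, apply $-\otimes_{R[X]}R$, and conclude via Proposition \ref{prop1}(2) together with $Ggldim(R[X])=Ggldim(R)+1$. The only divergence is that you correctly justify the two delicate steps --- exactness of the reduced sequence via the vanishing of $Tor^{R[X]}_1(M[X],R)$ computed from $0\rightarrow R[X]\stackrel{X}{\rightarrow}R[X]\rightarrow R\rightarrow 0$, and the finiteness of $pd_R(Q/XQ)$ via the snake lemma plus a change-of-rings theorem --- whereas the paper glosses over both by asserting that $R$ is a projective $R[X]$-module, which is false ($pd_{R[X]}(R)=1$), so your version is in fact the more defensible one.
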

\begin{proof}
By induction on $n$ is suffices to prove the result for $n=1$. So,
suppose that $R[X]$ is an $m$-SG ring. Let $M$ be an arbitrary
$R$-module. For the $R[X]$-module $M[X]:=M\otimes_RR[X]$ there is
an exact sequence of $R[X]$-modules $0\rightarrow M[X]\rightarrow
P \rightarrow M[X]\rightarrow 0$ where $pd_{R[X]}(P)\leq m$.
 Applying $-\otimes_{R[X]}R$ to the short exact sequence above and
 seeing that $M\cong_RM[X]\otimes_{R[X]}R$, we obtain a short exact
 sequence of $R$-modules with the form $0\rightarrow M \rightarrow P\otimes_{R[X]}R\rightarrow M \rightarrow 0$ (see that $R$ is a projective
 $R[X]$-module). Moreover, $pd_R(P\otimes_{R[X]}R)\leq
 pd_{R[X]}(P)<\infty$. On the other hand,
 $Ggldim(R)=Ggldim(R[X])-1\leq m-1$ (by \cite[Theorem 2.1]{Bennis and Mahdou3}). Hence, by Proposition
 \ref{prop1}, $R$ is an $(m-1)$-SG ring, as desired.
\end{proof}
\begin{theorem}\label{R[X]2}
If $R[X_1,X_2,...,X_n]$ is a coherent $m$-wSG ring, then $R$ is an
$(m-n)$-wSG-ring.
\end{theorem}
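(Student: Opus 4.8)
The plan is to follow the proof of Theorem \ref{R[X]1} line by line, replacing projective dimension by flat dimension and supplying the flat analogues of the two ingredients used there. By induction on $n$, writing $R[X_1,\dots,X_n]=R[X_1,\dots,X_{n-1}][X_n]$, it suffices to treat the case $n=1$; here coherence propagates correctly down the induction, since $S[X]$ is coherent if and only if $S$ is (cf. \cite{Glaz}), so at each stage the smaller polynomial ring is again coherent. Thus I assume $R[X]$ is a coherent $m$-wSG ring and aim to show that $R$ is $(m-1)$-wSG.

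First I fix an arbitrary $R$-module $M$ and pass to the $R[X]$-module $M[X]:=M\otimes_R R[X]$. Because $R[X]$ is $m$-wSG, $M[X]$ is strongly $m$-Gorenstein flat, so there is a short exact sequence of $R[X]$-modules $0\to M[X]\to F\to M[X]\to 0$ with $fd_{R[X]}(F)\le m$. I then descend along the surjection $R[X]\to R[X]/(X)\cong R$ by applying $-\otimes_{R[X]}R$. Using the free resolution $0\to R[X]\xrightarrow{\;\cdot X\;}R[X]\to R\to 0$ one gets $M[X]\otimes_{R[X]}R\cong M$, and since $X$ is a nonzerodivisor on $M[X]$ one gets $Tor_1^{R[X]}(M[X],R)=0$; hence the tensored sequence remains short exact and produces $0\to M\to F\otimes_{R[X]}R\to M\to 0$ over $R$. (This $Tor$ vanishing, rather than any projectivity of $R$ over $R[X]$, is what keeps the sequence exact.)

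Next I bound the flat dimension of $F':=F\otimes_{R[X]}R=F/XF$ over $R$. A snake-lemma argument applied to multiplication by $X$ on the sequence above shows that $X$ is a nonzerodivisor on $F$; moreover, since $R[X]$ is free over $R$, any flat $R[X]$-resolution of $F$ of length $\le m$ restricts to a flat $R$-resolution, so $fd_R(F)\le m$. The exact sequence $0\to F\xrightarrow{\;\cdot X\;}F\to F'\to 0$ of $R$-modules then gives $fd_R(F')<\infty$ (in fact $\le m+1$; the sharper change-of-rings bound $fd_R(F')\le fd_{R[X]}(F)$ also holds, but only finiteness is needed). Finally, combining $wGgldim(R[X])\le m$ (Proposition \ref{prop2}) with the weak-dimension polynomial identity $wGgldim(R[X])=wGgldim(R)+1$ from \cite{Bennis and Mahdou3} yields $wGgldim(R)\le m-1$. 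Applying the implication $(2)\Rightarrow(1)$ of Proposition \ref{prop2} with $n=m-1$---which requires no coherence hypothesis---then shows that $R$ is $(m-1)$-wSG, as desired.

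The main obstacle is the middle step. In Theorem \ref{R[X]1} the finiteness of $pd_R(P\otimes_{R[X]}R)$ is immediate from the classical projective change-of-rings theorem; here I must instead verify by hand, via the nonzerodivisor behaviour of $X$, both that tensoring preserves exactness (the $Tor_1$ vanishing) and that the descended module has finite flat dimension (restriction of scalars together with the multiplication-by-$X$ sequence). The only other delicate point is invoking the correct coherent weak-Gorenstein polynomial identity $wGgldim(R[X])=wGgldim(R)+1$, which is precisely where the coherence hypothesis on $R[X]$ is consumed.
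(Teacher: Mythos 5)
Your proof is correct and follows essentially the same route as the paper, whose own proof of this theorem simply says the argument is ``similar to the proof of Theorem \ref{R[X]1}'' using \cite[Theorem 2.11]{Bennis and Mahdou3} and Proposition \ref{prop2}; in fact you supply the details more carefully, in particular the $Tor_1$-vanishing justification for exactness after applying $-\otimes_{R[X]}R$ (the paper's template proof of Theorem \ref{R[X]1} instead leans on the incorrect assertion that $R$ is a projective $R[X]$-module). One small caveat: $S$ coherent does not imply $S[X]$ coherent in general, so your ``if and only if'' is overstated --- but the direction you actually use, that the quotient of the coherent ring $R[X_1,\dots,X_n]$ by the finitely generated ideal $(X_n)$ is again coherent, is correct, so the induction is unaffected.
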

\begin{proof} Note in first that for every $i\leq n$, the
polynomial ring $R[X_1,...,X_i]$ is coherent (by using
\cite[Theorem 4.1.1(1)]{Glaz}). By the induction on $n$, it
suffices to prove the result for $n=1$. Using \cite[Theorem
2.11]{Bennis and Mahdou3} and Proposition \ref{prop2}, the proof
is similar to the proof of Theorem \ref{R[X]1}.
\end{proof}

Trivial examples of the $n$-SG-ring (resp., $n$-wSG ring) are the
rings with global dimension (resp., weak global dimension) $\leq
n$. The following example give a new family of commutative $n$-SG
rings (resp., $n$-wSG rings) with infinite weak global dimension.
\begin{example}\label{1}
Consider the non semi-simple quasi-Frobenius rings
$R_1:=K[X]/(X^2)$ and $R_2:=K[X]/(X^3)$ where $K$ is a field and
let $S$ be a non Noetherian  ring such that $gldim(S)= n$. Then,
\begin{enumerate}
\item $Ggldim(R_1)=Ggldim(R_2)=0$ and $R_1$ is $0$-SG ring but
$R_2 $ is not.
    \item $R_1\times S$ is a non Noetherian $n$-SG (and so $n$-wSG) ring with infinite weak
global dimension.
 \item $Ggldim(R_2\times S)=n$ but $R_2\times S$ is not an $n$-SG ring (a non Noetherian
 ring)with infinite weak
global dimension.
    \item $wGgldim(R_2[X_1,...,X_n])=Ggldim(R_2[X_1,...,X_n])=n$  but $R_2[X_1,...,X_n]$  is
    neither  $n$-SG ring nor $n$-wSG ring with infinite weak
global dimension.
\end{enumerate}

\end{example}

\begin{proof}
From \cite[Corollary 3.9]{ouarghi} and \cite[Proposition
2.6]{Bennis and Mahdou2}, $Ggldim(R_1)=Ggldim(R_2)=0$ and $R_1$ is
$0$-SG ring but $R_2 $ is not. So, $(1)$ is clear.  Moreover $R_1$
and $R_2$ have infinite weak global dimensions. By \cite[Theorems
2.1 and 3.1]{Bennis and Mahdou3} and Lemma \ref{wg<g} and the fact
that $R_2$ is Noetherian, it is easy to see that,
\begin{itemize}
    \item $Ggldim(R_2\times S)=n$, and
    \item $wGgldim(R_2[X_1,...,X_n])=Ggldim(R_2[X_1,...,X_n])=n$.
\end{itemize}
And using Theorems \ref{product} and \ref{R[X]1}, we conclude that
$R_1\times S$ is an  $n$-SG (and so $n$-wSG by Theorem \ref{thm1})
and that $R_2[X_1,...,X_n]$  is neither  $n$-SG ring nor $n$-wSG
ring (by Theorem \ref{thm1} since $R_2$ is Noetherian). Hence,
$(2)$, $(3)$ and $(4)$ hold.
\end{proof}

With rings with finite weak global dimension, it is clear that
left implication of Theorem \ref{thm1} is not true. The following
Example shows the same thing  with rings with infinite weak global
dimensions.

\begin{example}\label{3}
Consider the non semi-simple quasi-Frobenius rings
$R_1:=K[X]/(X^2)$ and $R_2:=K[X]/(X^3)$ where $K$ is a field and a
family of coherent rings $\{S_n\}_{n\in \mathbb{N}}$ such that
$n=wdim(S_n)<gldim(S_n)$ (for example $S_n:=S_0[X_1,X_2,...,X_n]$
where $S_0$ is a non-Noetherian Von Neumann regular ring). For
every positive integer n, set $R_1^n:=R_0\times S_n$ and
$R_2^n:=R_1\times S_n$. Then,
\begin{enumerate}
    \item $R_0^n$ is an $n$-wSG ring which is not $n-SG$ ring.
    \item $wGgldim(R_1^n)=n$ but $R_1^n$ is not an $n$-wSG ring.
\end{enumerate}

\end{example}

\begin{proof}
$(1).$ Since $wdim(S_n)=n$, the ring $S_n$ is $n-wSG$. On the
other hand, from \cite[Corollary 3.9]{ouarghi}, $R_1$ is a $0-SG$
ring. Then, it is also an $n-SG$ ring. Hence, from Theorem
\ref{thm1}, $R_1$ is an $n-wSG$ ring. Thus, by Theorem
\ref{product}, $R_1\times S_n$ is an $n-wSG$ ring. But
$gldim(S_n)>n$ implies that $Ggldim(R_1^n)>n$ (by \cite[Theorem
3.1]{Bennis and Mahdou3}). So, from Proposition \ref{prop1},
$R_1^n$ is not an $n$-SG ring, as
desired.\\
$(2).$ From \cite[Theorem 3.5]{Bennis and Mahdou3}, it is clear
that $wGgldim(R_2^n)=n$. And using Theorem \ref{product}, if
$R_2^n$ is an $n$-wSG ring, we conclude that $R_2$ is an $n$-wSG
ring and so $n$-SG ring since $R$ is Noetherian. But
$Ggldim(R_2)=0$. So, from Proposition \ref{prop1}, $R_2$ is a
$0$-SG ring. Contradiction with Example \ref{1}.
\end{proof}



\bigskip\bigskip

\end{document}